\newtheorem{dfn}{Definition}[section]
\newtheorem{prop}[dfn]{Proposition}
\newtheorem{theo}[dfn]{Theorem}
\newtheorem{ex}[dfn]{Example}
\newtheorem{rem}[dfn]{Remark}
\newcommand{\RR}{\mathbb{R}}
\newcommand{\CC}{\mathbb{C}}
\newcommand{\NN}{\mathbb{N}}
\newcommand{\ZZ}{\mathbb{Z}}
\newcommand{\cE}{\mathcal{E}}
\newcommand{\cF}{\mathcal{F}}
\newcommand{\Hom}{\text{Hom}}
\newcommand{\Lin}{\text{Lin}}
\newcommand{\com}{\mathbin{{\scriptstyle \circ }}}
\newcommand{\ev}{\mathord{\mathrm{ev}}}
\newcommand{\C}{\mathord{\mathcal{C}^{\infty}}}
\newcommand{\Cc}{\mathord{\mathcal{C}^{\infty}_{c}}}
\title[]
      {Automatic continuity of transversal distributions}
\author{J. Kali\v{s}nik}
\address{Department of Mathematics, University of Ljubljana,
         Jadranska 19, 1000 Ljubljana, Slovenia;
         Institute of Mathematics, Physics and Mechanics,
         University of Ljubljana, Jadranska 19,
         1000 Ljubljana, Slovenia}
\email{jure.kalisnik@fmf.uni-lj.si}
\thanks{This research was supported by research grants J1-1690, N1-0137 and research program Analysis and Geometry P1-0291 from 
the Slovenian Research Agency ARRS}
\subjclass[2010]{46A04,46F05,46G05,46H40}
\keywords{Distributions with compact support, Fr\'{e}chet spaces, Schwartz kernel theorem, homomorphisms of modules}
\begin{document}

\begin{abstract}
We prove that every transversal distribution on the fiber bundle $M\times N\to M$ is automatically continuous.
\end{abstract}

\maketitle

\section{Introduction}

The space of continuous linear operators on the Fr\'{e}chet algebra $\C(M)$ of smooth
functions on a smooth manifold $M$ contains many important examples, such as differential, integral
and translation operators. If $N$ is another smooth manifold, any such operator
$T:\C(N)\to\C(M)$ can be by the Schwartz kernel theorem 
represented by its distributional kernel. In \cite{Tre67} such kernels were called semiregular
in $M$. Each such kernel can be naturally considered as a smooth map on $M$ with values in the
space $\cE'(N)$ of compactly supported distributions on $N$ and in this way we obtain an isomorphism 
\[
\Hom(\C(N),\C(M))\cong\C(M,\cE'(N)).
\]

More recently such distributions have been studied in the context of distributions on Lie groupoids. 
In \cite{AnSk11} distributions which are transverse to a submersion were used to define 
pseudodifferential calculus on a singular foliation. In \cite{ErYu19} similar objects were used to define 
pseudodifferential calculus on a manifold from a geometric viewpoint in terms of the tangent groupoid. 
Distributions which are transverse to the source and the target map of a Lie groupoid form distributional 
convolution algebra of a Lie groupoid \cite{LeMaVa17},
which generalizes the convolution algebra $\cE'(G)$ of compactly supported distributions on a Lie group $G$.

The space $\Hom(\C(N),\C(M))$ can be also described from another point of view. By using the projection from
$M\times N$ to $M$, one can define a $\C(M)$-module structure on $\C(M\times N)$. The space 
$\Hom_{\C(M)}(\C(M\times N),\C(M))$ of continuous $\C(M)$-linear maps is then a direct generalization of 
the space $\cE'(N)$ and coincides with it if $M$ is a point. Elements of $\Hom_{\C(M)}(\C(M\times N),\C(M))$
will be called continuous transversal distributions on the bundle $M\times N\to M$. 
It was shown in \cite{LeMaVa17} that there is a natural isomorphism
\[
\C(M,\cE'(N))\cong\Hom_{\C(M)}(\C(M\times N),\C(M)).
\]
The main result in our paper is the following theorem.
\begin{theo}
Let $M$ and $N$ be smooth manifolds and let $\dim(M)>0$. Then we have the equality
\[
\Lin_{\C(M)}(\C(M\times N),\C(M))=\Hom_{\C(M)}(\C(M\times N),\C(M)).
\]
In other words: every $\C(M)$-linear homomorphism from $\C(M\times N)$ to $\C(M)$ is 
automatically continuous.
\end{theo}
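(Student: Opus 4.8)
The plan is to prove continuity through the closed graph theorem, reducing the problem to a pointwise statement on $M$ and then exploiting the hypothesis $\dim M>0$. Write $\phi$ for the given $\C(M)$-linear map. Since $\C(M\times N)$ and $\C(M)$ are Fr\'{e}chet spaces, $\phi$ is continuous as soon as its graph is closed, i.e.\ as soon as $g_n\ra 0$ in $\C(M\times N)$ together with $\phi(g_n)\ra h$ in $\C(M)$ force $h=0$. The idea is to show that the value $\phi(g)(p)$ at a point $p\in M$ is governed by a single linear functional on $\C(N)$, to isolate the set $D\subseteq M$ of points where that functional is discontinuous, and finally to prove that $D$ is too small to obstruct the closed graph criterion.

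First I would record a locality principle: if $g$ vanishes on $\pr^{-1}(W)$ for some open $W\ni p$, where $\pr\colon M\times N\ra M$, then $\phi(g)(p)=0$; indeed, choosing $\tau\in\C(M)$ equal to $1$ near $p$ and supported in $W$ gives $\tau\cdot g=0$, whence $\tau\cdot\phi(g)=\phi(\tau\cdot g)=0$ and evaluation at $p$ yields the claim. Working in a chart $U\cong\RR^m$ centered at $p$, with $m=\dim M\geq 1$, and applying Hadamard's lemma in the $M$-directions, $g(x,n)=g(p,n)+\sum_{i=1}^m x_i\,h_i(x,n)$ on $U\times N$. Multiplying by suitable cut-offs, pushing the coordinate factors through the module map, and using $x_i(p)=0$ together with the locality principle, I obtain the pointwise formula
\[
\phi(g)(p)=\mu_p\bigl(g|_{\{p\}\times N}\bigr),\qquad \mu_p(\psi):=\phi(\pr_N^{*}\psi)(p),
\]
where $\mu_p$ is a linear functional on $\C(N)$. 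Set $D:=\{p\in M:\mu_p\text{ is not continuous}\}$.

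The heart of the argument, and the step I expect to be the main obstacle, is a gliding-hump construction showing that $D$ has no accumulation point. Suppose $p_k\ra p_0$ with distinct $p_k\in D$. For each $k$ the discontinuity of $\mu_{p_k}$ furnishes, after rescaling, a function $\psi_k\in\C(N)$ with $\mu_{p_k}(\psi_k)=1$ that is \emph{arbitrarily small} in every seminorm of $\C(N)$; this freedom is exactly what discontinuity provides, and it is what defeats the blow-up of derivatives of the bumps below. Choosing pairwise disjoint shrinking neighborhoods $V_k\ni p_k$ avoiding $p_0$, bump functions $\chi_k\in\C(M)$ supported in $V_k$ with $\chi_k(p_k)=1$, and taking the $\psi_k$ small enough relative to the shrinking of $V_k$, the series $g(x,n):=\sum_k\chi_k(x)\psi_k(n)$ converges to a smooth function on $M\times N$. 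By disjointness $g|_{\{p_k\}\times N}=\psi_k$ while $g|_{\{p_0\}\times N}=0$, so the pointwise formula gives $\phi(g)(p_k)=1$ for all $k$ and $\phi(g)(p_0)=0$, contradicting continuity of the smooth function $\phi(g)$ at $p_0$.

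Hence $D$ is closed and discrete, and here $\dim M>0$ enters decisively: a discrete subset of a manifold of positive dimension is nowhere dense, so $M\setminus D$ is dense (for $\dim M=0$ this fails, which is consistent with the statement failing there). To conclude, suppose $g_n\ra 0$ and $\phi(g_n)\ra h$. For every $p\in M\setminus D$ the functional $\mu_p$ is continuous and $g_n|_{\{p\}\times N}\ra 0$ in $\C(N)$, so $h(p)=\lim_n\phi(g_n)(p)=\lim_n\mu_p\bigl(g_n|_{\{p\}\times N}\bigr)=0$. Thus $h$ vanishes on the dense set $M\setminus D$, hence $h=0$, the graph of $\phi$ is closed, and $\phi$ is continuous.
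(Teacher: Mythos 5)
Your proof is correct, but it takes a genuinely different route from the paper's, and the endgame in particular is a real shortcut. The pointwise formula $\phi(g)(p)=\mu_p(g|_{\{p\}\times N})$, which you derive from the locality principle plus Hadamard's lemma and cut-offs, is obtained in the paper by citing the factorization of the vanishing ideal $I_{N_x}$ from \cite{LeMaVa17}; these are interchangeable ingredients. The middle step is where the architectures diverge: the paper first proves that the set $D$ of bad points is \emph{open} (Proposition~\ref{openness of discontinuous values}, a Banach--Steinhaus argument using barrelledness of $\C(N)$) and then that it is \emph{empty} (Proposition~\ref{emptyness of discontinuous values}), by embedding a circle inside the open set of bad points, extracting a fast-falling sequence $f_n$ with $u_{x_n}(f_n)=1$, and invoking the Special Curve Lemma of \cite{KrMi97} together with extension off the closed submanifold $S\times N$. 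Your disjoint-bump series $\sum_k\chi_k\ten\psi_k$ is the same construction carried out by hand, with one structural twist: you only assume that distinct points of $D$ accumulate somewhere, without requiring the limit point itself to be bad, so you never need the Banach--Steinhaus openness step; the price is the weaker conclusion that $D$ is closed and discrete rather than empty. The real divergence is at the end: having shown $D=\emptyset$, the paper still must prove that $x\mapsto u_x$ is a smooth $\cE'(N)$-valued family (Proposition~\ref{Isomorphism between transversal distributions and smooth maps}) and then run the quantitative Leibniz/equicontinuity estimates of Proposition~\ref{Smooth family defines continuous transversal distribution} to conclude continuity, whereas you bypass both steps with the closed graph theorem, for which nowhere-density of $D$ suffices. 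What the paper's longer route buys is the isomorphism $\Lin_{\C(M)}(\C(M\times N),\C(M))\cong\C(M,\cE'(N))$ itself, which it needs for the kernel-theorem interpretation; your argument does not produce it directly, though it follows a posteriori once continuity is known. Two details to make explicit in a full write-up: a fixed nonzero $\psi_k$ cannot literally be ``arbitrarily small in every seminorm,'' so state precisely what discontinuity of $\mu_{p_k}$ gives --- namely $\mu_{p_k}(\psi_k)=1$ with $q_{n_k}(\psi_k)\le\varepsilon_k$ for a seminorm index $n_k$ and tolerance $\varepsilon_k$ prescribed \emph{after} the $\chi_k$ are fixed, with $n_k\to\infty$ and $\varepsilon_k$ small enough to beat the derivative growth of the shrinking bumps; and the smoothness of $\sum_k\chi_k\ten\psi_k$ along the accumulation fiber $\{p_0\}\times N$ needs exactly this estimate (uniform convergence of all derivatives on $M\times L$ for every compact $L\subseteq N$), since disjointness of the supports alone does not give smoothness there.
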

Combining this result with previously known results we obtain an isomorphism
\[
\Hom(\C(N),\C(M))\cong\Lin_{\C(M)}(\C(M\times N),\C(M)).
\]
This result is somewhat surprising since the space $\Lin(\C(N),\C(M))$ of all linear maps is always strictly larger than the
subset $\Hom(\C(N),\C(M))$ of continuous maps. It is a consequence of the fact that in $\C(M\times N)$ we have enough space
to locally smoothly deform an element of $\C(N)$ to prove continuity.

\section{Preliminaries}

For the convenience of the reader and to fix the notations, we will first quickly collect some
basic facts about locally convex vector spaces. Detailed statements and proofs of most of the
results stated below can be found for example in \cite{KrMi97},\cite{Tre67}.

All our locally convex spaces will be complex and Hausdorff. For any locally
convex space $E$ we will denote by $E^{*}=\Lin(E,\CC)$ the space of all linear functionals on $E$ and by
$E'=\Hom(E,\CC)$ the space of all continuous linear functionals on $E$. If $F$ is another locally convex space,
we similarly define by $\Lin(E,F)$ and $\Hom(E,F)$ the spaces of all linear maps respectively continuous linear maps.
If $E$ and $F$ are modules over a complex algebra $A$, we will denote the corresponding spaces of module 
homomorphisms by $\Lin_{A}(E,F)$ and $\Hom_{A}(E,F)$.

A smooth curve in $E$ is a mapping $\gamma:\RR\to E$ for which all iterated derivatives exist. If $M$ is a smooth manifold,
a vector valued function $\alpha:M\to E$ is smooth if in local coordinates all partial derivatives exist
and are continuous. A map $T:E\to F$ is smooth if it maps smooth curves in $E$ to smooth curves in $F$. If $T$ is a linear
map, then it is smooth if and only if it is bounded. In particular, this implies that all continuous linear maps are smooth.
A map $\alpha:M\to E$ is scalarly smooth if the map $\phi\circ\alpha:M\to\CC$ is smooth for every $\phi\in E'$. If $\alpha$
is smooth it is also scalarly smooth, but the implication in the reverse direction does not always hold. It holds though in the
cases which are of most interest to us. In particular, if the space $E$ is complete, then every scalarly smooth function into $E$ is 
smooth.

We will be frequently dealing both with smooth functions and smooth vector valued functions on a manifold. To make a distinction,
we will denote by $f(x)\in\CC$ the value of a function $f:M\to\CC$ at $x$ and by $u_{x}\in E$ the value of $u:M\to E$ at $x$. It
will be convenient to think of $u:M\to E$ as a family $u=(u_{x})$ of elements of $E$, parametrised by $M$. The spaces of functions
and smooth functions on $M$ with values in $E$ will be denoted by $\cF(M,E)$ and $\C(M,E)$.

For any $E$ we will denote by $E'_{b}$ the continuous dual, equipped with the (strong) topology of uniform convergence on bounded subsets of $E$.
We have an injective map $\hat\,:E\to(E'_{b})'$, given by $\hat{x}(\phi)=\phi(x)$ for $x\in E$ and $\phi\in E'$. In general
this map is not an isomorphism. If $M$ is a smooth manifold, then we will denote by $\cE'(M)=\C(M)'_{b}$ the space of
compactly supported distributions on $M$, equipped with the strong topology. It is known that the space $\C(M)$ is reflexive, which
means that $\C(M)\cong\cE'(M)'_{b}$.

We will also consider the space $\cE^{*}(M)$ of all linear functionals on $\C(M)$ and equip it with the topology of pointwise convergence on
$\C(M)$. The space $\cE^{*}(M)$ is complete and we have $\cE^{*}(M)=\C(M)'_{\text{init}}$, 
where $\C(M)_{\text{init}}$ is the space $\C(M)$, equipped with the finest locally convex vector space topology. We then have an isomorphism 
of vector spaces $\cE^{*}(M)'\cong\C(M)$.

\section{Automatic continuity of transversal distributions}

Throughout this section $M$ and $N$ will be smooth, Hausdorff, second countable manifolds. We will furthermore assume that $\dim(M)>0$.
The space $\C(M\times N)$ can be equipped with the action of the algebra $\C(M)$, given by the formula
\[
(\rho\cdot f)(x,y)=\rho(x)f(x,y)
\]
for $\rho\in\C(M)$, $f\in\C(M\times N)$, $x\in M$ and $y\in N$. We will show that any linear homomorphism
of $\C(M)$-modules $u\in\Lin_{\C(M)}(\C(M\times N),\C(M))$ is automatically continuous, if we equip the spaces
of smooth functions with Fr\'{e}chet topologies. 

We will denote by $N_{x}:={x}\times N$ the fiber of the projection $M\times N\to M$ over $x\in M$
and by $I_{N_{x}}=\{f\in\C(M\times N)\,|\,f|_{N_{x}}=0\}$ the ideal of functions that vanish on $N_{x}$.
The authors have shown in \cite{LeMaVa17} that any $f\in I_{N_{x}}$ can be written in the form
\[
f=\rho_{1}\cdot f_{1}+\ldots+\rho_{k}\cdot f_{k}
\]
for some functions $f_{1},\ldots,f_{k}\in\C(M\times N)$ and $\rho_{1},\ldots,\rho_{k}\in\C(M)$ for which 
$\rho_{1}(x)=\ldots=\rho_{k}(x)=0$. Let us now fix $u\in\Lin_{\C(M)}(\C(M\times N),\C(M))$. 
If we denote by $\ev_{x}:\C(M)\to\CC$ the evaluation at the point $x$,
it now follows from $\C(M)$-linearity of $u$ that
\[
(\ev_{x}\circ u)(f)=u(\rho_{1}\cdot f_{1}+\ldots+\rho_{k}\cdot f_{k})(x)=\rho_{1}(x)u(f_{1})(x)+\ldots+\rho_{k}(x)u(f_{k})(x)=0
\]
for any $f\in I_{N_{x}}$. Now note that $\C(M\times N)/I_{N_{x}}\cong\C(N_{x})$ to see
that the map $\ev_{x}\circ u$ induces a linear map
\[
u_{x}:\C(N_{x})\to\CC
\]
for every $x\in M$. It is defined by $u_{x}(f)=u(\tilde{f})$ where $\tilde{f}\in\C(M\times N)$ is an arbitrary extension of $f\in\C(N_{x})$.
By using the identification $\cE^{*}(N_{x})\cong\cE^{*}(N)$ we can combine all these maps into a map $\Phi(u):M\to\cE^{*}(N)$, 
given by $\Phi(u)_{x}=u_{x}$. In this way we obtain an injective linear map
\[
\Phi:\Lin_{\C(M)}(\C(M\times N),\C(M))\to\cF(M,\cE^{*}(N)).
\]
In the sequel we will show that we can identify the image of $\Phi$ with the space of smooth maps $\C(M,\cE'(N))$ with values in continuous linear functionals
on $\C(N)$.

\begin{prop}\label{openness of discontinuous values}
The set $\{x\in M\,|\,u_{x}\text{ is discontinuous}\}$ is an open subset of $M$ for every $u\in\Lin_{\C(M)}(\C(M\times N),\C(M))$.
\end{prop}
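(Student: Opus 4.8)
The plan is to prove the equivalent statement that the complementary set $C=\{x\in M\mid u_{x}\text{ is continuous}\}$ is closed; since $M$ is metrizable this is the same as showing $C$ is sequentially closed. So I would fix a sequence $x_{k}\to x_{0}$ with every $u_{x_{k}}\in\cE'(N)$ continuous, and aim to prove that $u_{x_{0}}$ is continuous as well. I deliberately argue via closedness of $C$ rather than trying to ``spread'' a discontinuity from $x_{0}$ to nearby points directly: a naive attempt of the latter kind founders because the neighborhoods on which a witnessing sequence misbehaves tend to shrink to $\{x_{0}\}$.

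The first step is to record a scalar smoothness property of the family $(u_{x})$. Given $g\in\C(N)$, consider the function $F\in\C(M\times N)$ defined by $F(x,y)=g(y)$. Because $u$ takes values in $\C(M)$, the function $u(F)$ is smooth, and by construction $u(F)(x)=(\ev_{x}\com u)(F)=u_{x}(g)$ under the identification $\C(N_{x})\cong\C(N)$, since $F$ is an extension of $g$. Hence for every fixed $g$ the map $x\mapsto u_{x}(g)$ is smooth, in particular continuous. Applying this along the sequence above yields $u_{x_{k}}(g)\to u_{x_{0}}(g)$ for every $g\in\C(N)$, that is, $u_{x_{k}}\to u_{x_{0}}$ pointwise in $\cE^{*}(N)$.

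The second step converts this pointwise convergence into continuity of the limit. Since each scalar sequence $u_{x_{k}}(g)$ converges, the family $(u_{x_{k}})_{k}$ is pointwise bounded on $\C(N)$. Now $\C(N)$ is a Fr\'{e}chet space and therefore barrelled, so by the Banach--Steinhaus theorem the family $(u_{x_{k}})_{k}$ of continuous functionals is equicontinuous: there is a neighborhood $U$ of $0$ in $\C(N)$ with $|u_{x_{k}}(g)|\le 1$ for all $k$ and all $g\in U$. Passing to the pointwise limit gives $|u_{x_{0}}(g)|\le 1$ on $U$, so $u_{x_{0}}$ is bounded near $0$ and hence continuous, i.e. $x_{0}\in C$. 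This shows $C$ is closed and its complement, the discontinuity set, is open.

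I expect the only delicate point to be the equicontinuity step: one must be sure that barrelledness of $\C(N)$ legitimately upgrades mere pointwise boundedness of the $u_{x_{k}}$ to a uniform bound on a single neighborhood of $0$, after which continuity of the limit is automatic. It is worth noting that this argument does not use $\dim(M)>0$ at all; that hypothesis is needed only for the deeper deformation arguments later, whereas here for $\dim(M)=0$ the manifold $M$ is discrete and every subset is trivially open.
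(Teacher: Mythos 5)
Your proof is correct and takes essentially the same approach as the paper: both arguments obtain pointwise convergence $u_{x_{k}}(g)\to u_{x_{0}}(g)$ from smoothness of $u(g\com\pr_{N})$ and then apply the Banach--Steinhaus theorem for the barrelled space $\C(N)$ to conclude that the limit functional is continuous. The only differences are cosmetic: you phrase the statement as sequential closedness of the continuity set rather than deriving a contradiction at a boundary point of the discontinuity set, and your remark that $\dim(M)>0$ is not actually needed for this step is also accurate.
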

\begin{proof}
Suppose $u_{x}\in\cE^{*}(N)$ is discontinuous for some $x\in M$ which lies in the boundary of the set $\{x\in M\,|\,u_{x}\text{ is discontinuous}\}$. 
Since $\dim(M)>0$, we can find a sequence of points $(x_{n})$ in $M$, which converges to $x$, and such that $u_{x_{n}}\in\cE'(N)$ for all $n\in\NN$.

Let us now choose an arbitrary $f\in\C(N)$ and define $\tilde{f}\in\C(M\times N)$ by $\tilde{f}=f\com\text{pr}_{N}$. From definition
of $u$ it follows that $u(\tilde{f})$ is smooth on $M$ which implies that 
\[
u_{x}(f)=u(\tilde{f})(x)=\lim_{n\to\infty}u(\tilde{f})(x_{n})=\lim_{n\to\infty}u_{x_{n}}(f).
\]
The sequence of continuous distributions $(u_{x_{n}})$ therefore converges pointwise to $u_{x}$ in $\cE^{*}(N)$. Since $\C(N)$ is a barelled 
locally convex space, it follows from a version of Banach-Steinhaus theorem \cite{Tre67} that $u_{x}$ is continuous which leads us to contradiction.
\end{proof}
\newpage

\begin{prop}\label{emptyness of discontinuous values}
The set $\{x\in M\,|\,u_{x}\text{ is discontinuous}\}$ is empty for every $u\in\Lin_{\C(M)}(\C(M\times N),\C(M))$.
\end{prop}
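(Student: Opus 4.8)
The plan is to argue by contradiction, exploiting the openness already established together with the freedom to deform functions in the fibre direction. Write $D=\{x\in M\,|\,u_{x}\text{ is discontinuous}\}$ and suppose $D\neq\emptyset$. By Proposition~\ref{openness of discontinuous values} the set $D$ is open, so I may pick $x_{0}\in D$ together with a chart around it and, since $\dim(M)>0$, a sequence of pairwise distinct points $p_{n}\in D$ with $p_{n}\ra x_{0}$, $p_{n}\neq x_{0}$, lying on a coordinate segment through $x_{0}$. The single fact about $u$ that I would use repeatedly is that for every $F\in\C(M\times N)$ one has $u(F)(x)=u_{x}(F|_{N_{x}})$, and that $u(F)\in\C(M)$ is in particular continuous on $M$.

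First I would record the precise consequence of discontinuity. For each $n$ the functional $u_{p_{n}}\in\cE^{*}(N)$ is not continuous, hence it is unbounded on every neighbourhood of $0$ in $\C(N)$; equivalently there is a sequence tending to $0$ in $\C(N)$ on which $u_{p_{n}}$ does not tend to $0$. Rescaling such a sequence, I obtain, inside any prescribed neighbourhood of $0$ in $\C(N)$, a function $h_{n}$ with $u_{p_{n}}(h_{n})=1$. This is the mechanism by which I produce a large value of $u$ from a function that is as small as I please in every Fr\'{e}chet seminorm.

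The core construction glues these witnesses over $M$. Choose bump functions $\chi_{n}\in\Cc(M)$ with pairwise disjoint supports shrinking to $x_{0}$, with $x_{0}\notin\supp\chi_{n}$, $\chi_{n}(p_{n})=1$ and $\chi_{n}(p_{m})=0$ for $m\neq n$, and set
\[
f(x,y)=\sum_{n}\chi_{n}(x)\,h_{n}(y).
\]
Away from $x_{0}$ only one term is nonzero near any point, so $f$ is smooth there; the delicate point is smoothness across the fibre $N_{x_{0}}$, where the whole tail of the series accumulates. I would fix the $\chi_{n}$ first, together with bounds on their derivatives, and then use a diagonal argument to shrink the neighbourhoods from which the $h_{n}$ are drawn, so that $\sum_{n}(\partial^{\alpha}\chi_{n})(x)(\partial^{\beta}h_{n})(y)$ converges uniformly on compacta for every $\alpha,\beta$ and is continuous, in particular vanishing continuously along $N_{x_{0}}$; this forces $f\in\C(M\times N)$. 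The essential use of discontinuity is that these shrinking constraints never prevent me from keeping $u_{p_{n}}(h_{n})=1$. I expect this gluing, that is, the verification that $f$ is genuinely smooth at $x_{0}$, to be the main obstacle.

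Finally the contradiction is immediate. Since the supports are disjoint and $\chi_{m}(p_{n})=0$ for $m\neq n$ while $\chi_{n}(p_{n})=1$, we get $f|_{N_{p_{n}}}=h_{n}$ and $f|_{N_{x_{0}}}=0$, so that
\[
u(f)(p_{n})=u_{p_{n}}(h_{n})=1,\qquad u(f)(x_{0})=u_{x_{0}}(0)=0.
\]
But $u(f)\in\C(M)$ is continuous and $p_{n}\ra x_{0}$, so $u(f)(p_{n})\ra u(f)(x_{0})=0$, contradicting $u(f)(p_{n})=1$ for all $n$. Hence $D=\emptyset$.
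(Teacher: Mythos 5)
Your proposal is correct, and although it shares the paper's overall skeleton---argue by contradiction, use Proposition~\ref{openness of discontinuous values} to produce a sequence of discontinuity points converging to a point of $M$, extract witnesses on which $u$ takes the value $1$ while they shrink to $0$ in $\C(N)$, assemble the witnesses into a single element of $\C(M\times N)$ vanishing on the limit fibre, and contradict the continuity of its image under $u$---the assembly step, which is the real content, is done by genuinely different means. The paper imposes on its witnesses $f_{n}$ the fast-falling condition $p_{n}(f_{n})\le 2^{-n}$ and then invokes ready-made machinery: the Special Curve Lemma of Kriegl--Michor to produce a smooth curve $F:\RR/2\ZZ\to\C(N)$ with $F_{0}=0$ and $F_{1/n}=f_{n}$, the exponential law $\C(\RR/2\ZZ,\C(N))\cong\C(\RR/2\ZZ\times N)$, and extension of smooth functions from the closed submanifold $S\times N\subset M\times N$ (the circle is used precisely so that $S\times N$ is closed). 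You instead glue by hand, $f=\sum_{n}\chi_{n}\ten h_{n}$ with pairwise disjoint bump supports, which eliminates all three pieces of machinery at the cost of a convergence estimate. Your sketch of that estimate is the right one and completes easily: fix an increasing fundamental sequence of seminorms $(q_{m})$ on $\C(N)$, and after the $\chi_{n}$ are fixed choose, using the discontinuity of $u_{p_{n}}$, witnesses with $u_{p_{n}}(h_{n})=1$ and $q_{n}(h_{n})\cdot\sup_{|\alpha|\le n}\sup_{M}|D^{\alpha}\chi_{n}|\le 2^{-n}$; then for any multi-indices $\alpha,\beta$ and any compact product set in $M\times N$ the terms of $\sum_{n}D^{\alpha}\chi_{n}\ten D^{\beta}h_{n}$ are eventually dominated by $2^{-n}$, so all termwise differentiated series converge uniformly on compacta, $f$ is smooth, and $f|_{N_{x_{0}}}=0$. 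You also isolate the one subtle point correctly: the $\chi_{n}$ must be fixed \emph{before} the $h_{n}$, since their derivative bounds necessarily blow up as the supports shrink to $x_{0}$, and it is exactly the discontinuity of the $u_{p_{n}}$ that lets the witnesses survive arbitrarily strong smallness constraints. In exchange for being longer in detail, your route is elementary and self-contained (a Weierstrass-type test plus termwise differentiation), whereas the paper's route is shorter given the cited results and has a simpler quantifier structure, since the smallness of the $f_{n}$ is prescribed once and for all, independently of any gluing data.
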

\begin{proof}
Suppose $u_{x}\in\cE^{*}(N)$ is discontinuous for some $x\in M$. By Proposition \ref{openness of discontinuous values} we can find an 
open neighbourhood $U$ of $x$ such that $u_{t}$ is discontinuous for all $t\in U$. Let us choose a smooth embedding
$\iota:\RR/2\ZZ\to U$ such that $\iota(0)=x$ and denote $x_{n}=\iota(\frac{1}{n})$. In particular, this implies that $u_{x_{n}}$
is discontinuous for all $n\in\NN$.

Since $\C(N)$ is a Fr\'{e}chet space, we can find an increasing sequence of seminorms $(p_{n})$ on $\C(N)$, which generate
the Fr\'{e}chet topology. A discontinuous linear functional on a Fr\'{e}chet space is unbounded, so we can for each $n\in\NN$ find a function
$f_{n}\in\C(N)$ such that:
\begin{align*}
p_{n}(f_{n})&\leq\tfrac{1}{2^{n}}, \\
u_{x_{n}}(f_{n})&=1.
\end{align*}

We will now show that the sequence $(f_{n})$ converges fast to $0$ in $\C(N)$ (this means that for each $k\in\NN$ the sequence $(n^{k}f_{n})$
remains bounded). So let us fix $k,l\in\NN$ and suppose $l\leq n$. We then have 
\[
p_{l}(n^{k}f_{n})=n^{k}p_{l}(f_{n})\leq n^{k}p_{n}(f_{n})\leq\tfrac{n^{k}}{2^{n}}.
\]
This implies that for any seminorm $p_{l}$ the set $\{p_{l}(n^{k}f_{n})\,|\,n\in\NN\}$ is bounded in $\RR$ and as a result
the sequence $(n^{k}f_{n})$ is bounded in $\C(N)$.

By Special Curve Lemma \cite{KrMi97} there exists a smooth function $F:\RR/2\ZZ\to\C(N)$ such that $F_{0}=0$ and $F_{\frac{1}{n}}=f_{n}$.
Let us denote by $S=\iota(\RR/2\ZZ)$ the embedded circle in $M$. We then have the following isomorphisms
\[
\C(\RR/2\ZZ,\C(N))\cong\C(\RR/2\ZZ\times N)\cong\C(S\times N).
\]
Using these isomorphisms we obtain from $F$ a smooth function $f\in\C(S\times N)$ which satisfies $f|_{N_{x}}=0$ and $f|_{N_{x_{n}}}=f_{n}$.
Since $S\times N$ is a closed submanifold of $M\times N$, we can smoothly extend $f$ to a function $\tilde{f}\in\C(M\times N)$. 

Now note that the function $u(\tilde{f})$ satisfies:
\begin{align*}
u(\tilde{f})(x)&=0, \\
u(\tilde{f})(x_{n})&=1.
\end{align*}
Since $(x_{n})$ converges to $x$, this implies that $u(\tilde{f})$ is not continuous, which contradicts the assumption that $u(\tilde{f})\in\C(M)$.
\end{proof}

From Proposition \ref{emptyness of discontinuous values} it now follows that we have an injective linear map
\[
\Phi:\Lin_{\C(M)}(\C(M\times N),\C(M))\to\cF(M,\cE'(N)).
\]
We will next show that $\Phi(u)$ is smooth for all $u\in\Lin_{\C(M)}(\C(M\times N),\C(M))$.

\begin{prop}\label{Isomorphism between transversal distributions and smooth maps}
We have an isomorphism of vector spaces
\[
\Phi:\Lin_{\C(M)}(\C(M\times N),\C(M))\to\C(M,\cE'(N)).
\]
\end{prop}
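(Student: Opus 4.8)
The plan is to establish three facts about $\Phi$: that its image lies in the subspace $\C(M,\cE'(N))$ of smooth maps, that it is surjective onto that subspace, and — combining these with the injectivity already obtained — that $\Phi$ is an isomorphism of vector spaces.

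First I would show that $\Phi(u)\colon M\ra\cE'(N)$ is smooth for every $u$. Since $\C(N)$ is metrizable, its strong dual $\cE'(N)$ is complete, so by the remarks in Section~2 it suffices to prove that $\Phi(u)$ is scalarly smooth. As $\C(N)$ is reflexive, the continuous linear functionals on $\cE'(N)$ are precisely the evaluations $\mu\mapsto\mu(f)$ at elements $f\in\C(N)$. For such an $f$ put $\tilde f=f\com\pr_{N}\in\C(M\times N)$; then the composite $(\mu\mapsto\mu(f))\com\Phi(u)$ sends $x$ to $u_{x}(f)=u(\tilde f)(x)$, and $u(\tilde f)\in\C(M)$ by the definition of $u$. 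Hence every such composite is smooth, so $\Phi(u)$ is scalarly smooth and therefore smooth; that is, $\Phi(u)\in\C(M,\cE'(N))$.

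The remaining and harder point is surjectivity. Given a smooth map $v=(v_{x})\in\C(M,\cE'(N))$, I would define a candidate preimage by
\[
u(f)(x)=v_{x}(f_{x}),
\]
where, under the standard isomorphism $\C(M\times N)\cong\C(M,\C(N))$, the function $f$ corresponds to the smooth family $x\mapsto f_{x}$ with $f_{x}=f|_{N_{x}}\in\C(N)$. The $\C(M)$-linearity of $u$ is immediate, since $(\rho\cdot f)_{x}=\rho(x)\,f_{x}$ and $v_{x}$ is linear, whence $u(\rho\cdot f)(x)=\rho(x)\,u(f)(x)$. Moreover $u_{x}=v_{x}$ by construction, so $\Phi(u)=v$ as soon as we know that $u$ is well defined, i.e.\ that $u(f)\in\C(M)$.

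The main obstacle is exactly this smoothness of $x\mapsto v_{x}(f_{x})$. Here I would use that the evaluation pairing $\cE'(N)\times\C(N)\ra\CC$ is a bounded bilinear map — a bounded subset of $\cE'(N)$ is by definition equibounded on bounded subsets of $\C(N)$, so it sends products of bounded sets to bounded sets — and is therefore smooth in the sense of convenient calculus. Since both $x\mapsto v_{x}$ and $x\mapsto f_{x}$ are smooth, the map $x\mapsto(v_{x},f_{x})$ is smooth into the product $\cE'(N)\times\C(N)$, and composing it with the smooth evaluation shows that $u(f)\colon x\mapsto v_{x}(f_{x})$ is smooth. Thus $u(f)\in\C(M)$, so $u\in\Lin_{\C(M)}(\C(M\times N),\C(M))$ with $\Phi(u)=v$. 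Together with injectivity this proves that $\Phi$ is an isomorphism of vector spaces.
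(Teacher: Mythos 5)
Your proof is correct, and its overall structure matches the paper's: first, well-definedness of $\Phi$ into $\C(M,\cE'(N))$ via completeness of $\cE'(N)$ plus scalar smoothness, using reflexivity of $\C(N)$ to identify the continuous functionals on $\cE'(N)$ with evaluations at elements of $\C(N)$ and the identity $u_{x}(f)=u(f\com\pr_{N})(x)$; second, surjectivity by setting $u(f)(x)=v_{x}(f_{x})$ through the exponential law $\C(M\times N)\cong\C(M,\C(N))$ and factoring $u(f)$ through the evaluation pairing $M\to\cE'(N)\times\C(N)\to\CC$. The one place where you genuinely diverge is the justification that $\ev:\cE'(N)\times\C(N)\to\CC$ is smooth. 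The paper notes that $\ev$ is bilinear and separately continuous and invokes Theorem 5.19 of \cite{KrMi97} (using completeness of both factors) to conclude it is bounded, then Lemma 5.5 there to get smoothness. You instead verify boundedness directly from the definition of the strong topology: a bounded subset of $\cE'(N)=\C(N)'_{b}$ is, by definition of the seminorms $p_{B}(\mu)=\sup_{g\in B}|\mu(g)|$ with $B\subset\C(N)$ bounded, uniformly bounded on each such $B$, so $\ev$ carries products of bounded sets to bounded sets. This is more elementary and self-contained --- it needs no completeness hypothesis and no citation beyond ``bounded bilinear maps are smooth'' --- whereas the paper's route via separate continuity is the one that would survive in situations where the topology on the dual is not, by definition, a topology of uniform convergence on bounded sets. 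Either way the composition is smooth, and your remaining verifications ($\C(M)$-linearity and $\Phi(u)=v$) coincide with the paper's.
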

\begin{proof}
We will first show that $\Phi$ maps into $\C(M,\cE'(N))$. Since the space $\cE'(N)$ is complete, it suffices to show that the map 
$\Phi(u):M\to\cE'(N)$ is scalarly smooth for every $u\in\Lin_{\C(M)}(\C(M\times N),\C(M))$.
The space $\C(N)$ is a reflexive locally convex space, which means that we can identify
$\C(N)$ with the continuous dual of $\cE'(N)$. For arbitrary $\hat{f}\in\cE'(N)'\cong\C(N)$ we now obtain the function 
$\hat{f}\circ\Phi(u):M\to\CC$ given by
\[
(\hat{f}\circ\Phi(u))(x)=\hat{f}(u_{x})=u_{x}(f)=u(f\com\text{pr}_{N})(x).
\]
Since $f\com\text{pr}_{N}\in\C(M\times N)$ it follows that $u(f\com\text{pr}_{N})\in\C(M)$, which shows that $\Phi(u)$ is scalarly smooth
and therefore smooth. 

We now know that $\Phi:\Lin_{\C(M)}(\C(M\times N),\C(M))\to\C(M,\cE'(N))$ is a well defined injective linear map. It remains to be proven that
it is surjective. Take any $\overline{u}\in\C(M,\cE'(N))$ and define for $f\in\C(M\times N)$ a map $u(f):M\to\CC$ by
\[
u(f)(x)=\overline{u}_{x}(f|_{N_{x}}).
\]
We will show that $u(f)$ is smooth. We can view function $f$ as a smooth family $f_{x}=f|_{N_{x}}$
of smooth functions on $N$, parametrized by $M$, by using the isomorphism 
$\C(M\times N)\cong\C(M,\C(N))$. Now observe that we can express $u(f)$ as the composition
\[
M\overset{(\overline{u},f)}{\rightarrow} \cE'(N)\times\C(N)\overset{\text{ev}}{\rightarrow}\CC,
\]
where $\text{ev}(\overline{u}_{x},f_{x})=\overline{u}_{x}(f_{x})$ is the natural pairing between the space and its dual. The map $\text{ev}$ is bilinear and
separately continuous. Since both factors are complete, it follows from Theorem $5.19$ in \cite{KrMi97} that $\text{ev}$ is bounded and therefore
smooth by Lemma $5.5$ in \cite{KrMi97}. The function $u(f)$ is then smooth as a composition of two smooth mappings.
It is now easy to check that the map $u:\C(M\times N)\to\C(M)$, defined as above, is $\C(M)$-linear and that it satisfies
the equality $\Phi(u)=\overline{u}$.
\end{proof}

By Proposition \ref{Isomorphism between transversal distributions and smooth maps} any $u\in\Lin_{\C(M)}(\C(M\times N),\C(M))$ can be defined
by a smooth family $\Phi(u)\in\C(M,\cE'(N))$ such that
\[
u(f)(x)=\Phi(u)_{x}(f|_{N_{x}}).
\]
On the other hand, the authors have shown in \cite{LeMaVa17} that any $u$, defined by a smooth family of distributions, is actually continuous, if
we equip the function spaces with Fr\'{e}chet topologies. For completeness we provide below the idea of the proof.

\begin{prop}[Lescure,Manchon,Vassout]\label{Smooth family defines continuous transversal distribution}
For any $\Phi(u)\in\C(M,\cE'(N))$ the $\C(M)$-linear map $u:\C(M\times N)\to\C(M)$, given by 
\[
u(f)(x)=u_{x}(f|_{N_{x}}),
\]
is continuous with respect to Fr\'{e}chet topologies.
\end{prop}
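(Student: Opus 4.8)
The plan is to deduce continuity from the closed graph theorem, which applies because both $\C(M\times N)$ and $\C(M)$ are Fréchet spaces. We already know from Proposition \ref{Isomorphism between transversal distributions and smooth maps} that $u$ lands in $\C(M)$ and is linear, so it remains only to check that the graph of $u$ is closed. First I would take a sequence $(f_{n})$ in $\C(M\times N)$ with $f_{n}\to f$ and $u(f_{n})\to g$ in $\C(M)$, and aim to show $g=u(f)$. Since convergence in the Fréchet topology of $\C(M)$ entails pointwise convergence, it suffices to prove $g(x)=u(f)(x)$ for every fixed $x\in M$.

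For a fixed $x$, the key observation is that the restriction map $\C(M\times N)\to\C(N_{x})\cong\C(N)$, $f\mapsto f|_{N_{x}}$, is continuous, so $f_{n}|_{N_{x}}\to f|_{N_{x}}$ in $\C(N)$. Because $u_{x}=\Phi(u)_{x}$ lies in $\cE'(N)$ and is therefore a \emph{continuous} functional on $\C(N)$, passing to the limit gives
\[
g(x)=\lim_{n\to\infty}u(f_{n})(x)=\lim_{n\to\infty}u_{x}(f_{n}|_{N_{x}})=u_{x}(f|_{N_{x}})=u(f)(x).
\]
As this holds for all $x$, we get $g=u(f)$, the graph is closed, and continuity follows at once.

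Alternatively, one can prove the defining seminorm estimate directly, which is more hands-on and exhibits the quantitative dependence. Here I would fix a compact set $K\subset M$ and an order $m$, work in local coordinates, and use the Leibniz rule
\[
\partial_{x}^{\alpha}\bigl(u_{x}(f_{x})\bigr)=\sum_{\beta+\gamma=\alpha}\binom{\alpha}{\beta}\,\bigl(\partial_{x}^{\beta}u_{x}\bigr)\bigl(\partial_{x}^{\gamma}f_{x}\bigr),
\]
where $f_{x}=f|_{N_{x}}$ is viewed as a smooth $\C(N)$-valued function of $x$; this formula is justified by the smoothness of $x\mapsto u_{x}$ as a map into $\cE'(N)$. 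The main obstacle is then to bound all the resulting pairings uniformly for $x\in K$. I expect this to follow because the family $\{\partial_{x}^{\beta}u_{x}\mid x\in K,\ |\beta|\le m\}$ is the image of a compact set under a smooth, hence continuous, map into $\cE'(N)$ and is therefore bounded; since $\C(N)$ is barrelled, the Banach--Steinhaus theorem upgrades boundedness to equicontinuity. This produces a single compact $L\subset N$, an integer $d$, and a constant $C$ controlling every term, and assembling these yields an estimate $p_{K,m}(u(f))\le C'\,q(f)$ for a continuous seminorm $q$ on $\C(M\times N)$, which is precisely the asserted continuity.
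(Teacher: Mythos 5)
Your proof is correct, and your primary (closed-graph) argument takes a genuinely different route from the paper; your ``alternative'' second argument is in fact essentially the paper's own proof. The paper argues directly: working in local coordinates, it applies the generalised Leibniz rule $D^{\alpha}(u(f))(x)=\sum_{\beta\leq\alpha}\binom{\alpha}{\beta}(D^{\beta}u)_{x}\bigl((D^{\alpha-\beta}f)_{x}\bigr)$, observes that $\{(D^{\beta}u)_{x}\,:\,|x|\leq n,\ |\beta|\leq n\}$ is a compact, hence bounded, hence (by Banach--Steinhaus, $\C(N)$ being barrelled) equicontinuous subset of $\cE'(N)$, and assembles the explicit estimate $p_{M,n}(u(f))\leq C_{1}C_{2}\,p_{M\times N,m+n}(f)$ --- precisely the steps you sketch in your second paragraph, so nothing is missing there beyond the bookkeeping you anticipate. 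Your closed-graph argument is shorter and leans on different inputs: both $\C(M\times N)$ and $\C(M)$ are Fr\'{e}chet spaces (the manifolds are assumed second countable), so the closed graph theorem applies, and sequential closedness of the graph follows from three elementary facts --- evaluation at $x$ is continuous on $\C(M)$, restriction $f\mapsto f|_{N_{x}}$ is continuous from $\C(M\times N)$ to $\C(N_{x})\cong\C(N)$, and each $u_{x}$ is by hypothesis a \emph{continuous} functional on $\C(N)$. It is worth noting what each approach buys: yours uses only pointwise continuity of the family $(u_{x})$, with smoothness in $x$ entering solely through Proposition \ref{Isomorphism between transversal distributions and smooth maps} to guarantee that $u(f)\in\C(M)$, i.e.\ that $u$ is everywhere defined; the paper's computation avoids the completeness machinery behind the closed graph theorem and instead yields a quantitative seminorm bound with an explicit loss of derivatives ($n\mapsto m+n$), information that a closed-graph argument cannot produce.
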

\begin{proof}
Let us assume for simplicity that $M=\RR^{k}$ and $N=\RR^{l}$ to avoid some technical complications. In this case the Fr\'{e}chet
topology on $\C(M)$ is generated by the sequence of seminorms $(p_{M,n})_{n\in\NN}$, given by
\[
p_{M,n}(\rho)=\sup_{\substack{|x|\leq n\\ |\alpha|\leq n}}\left|(D^{\alpha}\rho)(x)\right|,
\]
where $\alpha=(\alpha_{1},\ldots,\alpha_{k})\in\NN_{0}^{k}$, $|\alpha|=\alpha_{1}+\ldots+\alpha_{k}$
and $D^{\alpha}=\frac{\partial^{|\alpha|}}{\partial x_{1}^{\alpha_{1}}\partial x_{2}^{\alpha_{2}}\cdots\partial x_{k}^{\alpha_{k}}}$.
We need to show that any $\C(M)$-linear map $u:\C(M\times N)\to\C(M)$ is bounded. Take any $f\in\C(M\times N)$ 
and represent it as a smooth map $f:M\to\C(N)$.
We can then express $u(f)(x)=u_{x}(f_{x})$ to obtain the following generalised Leibniz rule 
\[
D^{\alpha}(u(f))(x)=\sum_{\beta\leq\alpha}\binom{\alpha}{\beta}(D^{\beta}u)_{x}\left((D^{\alpha-\beta}f)_{x}\right).
\]
From this we obtain the bound
\[
p_{M,n}(u(f))=\sup_{\substack{|x|\leq n\\ |\alpha|\leq n}}
\left|\sum_{\beta\leq\alpha}\binom{\alpha}{\beta}(D^{\beta}u)_{x}\left((D^{\alpha-\beta}f)_{x}\right)\right|
\leq C_{1}\sup_{\substack{|x|\leq n\\ |\alpha|\leq n\\ \beta\leq\alpha}}
\left|(D^{\beta}u)_{x}\left((D^{\alpha-\beta}f)_{x}\right)\right|
\]
for some constant $C_{1}$. Since $u:M\to\cE'(N)$ is continuous, the set
\[
E_{n}=\{(D^{\beta}u)_{x},|\,|x|\leq n,|\beta|\leq n\}
\]
is a compact subset of $\cE'(N)$. It is then bounded and hence equicontinuous by the Banach-Steinhaus theorem. 
As a result there exist a seminorm $p_{N,m}$ on $\C(N)$ and a constant $C_{2}$ such that
\[
|v(g)|\leq C_{2}\,p_{N,m}(g)
\]
for all $g\in\C(N)$ and all $v\in E_{n}$. Now note that
\[
\sup_{\substack{|x|\leq n\\ |\alpha|\leq n\\ \beta\leq\alpha}}
\left|(D^{\beta}u)_{x}\left((D^{\alpha-\beta}f)_{x}\right)\right|
\leq C_{2}\sup_{\substack{|x|\leq n\\ |\alpha|\leq n\\ \beta\leq\alpha}}p_{N,m}((D^{\alpha-\beta}f)_{x})
\leq C_{2} p_{M\times N,m+n}(f).
\]
to obtain the bound
\[
p_{M,n}(u(f))\leq C_{1}C_{2}\,p_{M\times N,m+n}(f)
\]
which shows that $u$ is bounded and hence continuous.
\end{proof}

As a result we obtain the following theorem.

\begin{theo}\label{theo_Main theorem}
Let $M$ and $N$ be smooth manifolds and let $\dim(M)>0$. Then we have the equality
\[
\Lin_{\C(M)}(\C(M\times N),\C(M))=\Hom_{\C(M)}(\C(M\times N),\C(M)).
\]
In other words: every $\C(M)$-linear homomorphism from $\C(M\times N)$ to $\C(M)$ is 
automatically continuous.
\end{theo}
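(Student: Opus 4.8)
The plan is to assemble the three propositions already established into the final equality. The inclusion $\Hom_{\C(M)}(\C(M\times N),\C(M))\subseteq\Lin_{\C(M)}(\C(M\times N),\C(M))$ is immediate, since every continuous module homomorphism is in particular an algebraic one. The real content is the reverse inclusion, asserting that \emph{every} $\C(M)$-linear map is automatically continuous.

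First I would take an arbitrary $u\in\Lin_{\C(M)}(\C(M\times N),\C(M))$. By Proposition \ref{emptyness of discontinuous values}, the set of points $x$ at which the induced functional $u_{x}\in\cE^{*}(N)$ is discontinuous is empty; equivalently, $u_{x}\in\cE'(N)$ is a genuine continuous distribution for every $x\in M$. This is precisely what guarantees that the map $\Phi$ lands in $\cF(M,\cE'(N))$ rather than merely in $\cF(M,\cE^{*}(N))$, so that $\Phi(u)$ is a well-defined family of compactly supported distributions.

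Next I would invoke Proposition \ref{Isomorphism between transversal distributions and smooth maps} to upgrade this to smoothness of the family: the map $\Phi(u):M\to\cE'(N)$ is scalarly smooth, hence smooth by completeness of $\cE'(N)$, so in fact $\Phi(u)\in\C(M,\cE'(N))$. At this stage $u$ is completely described by a smooth family of distributions via the formula $u(f)(x)=\Phi(u)_{x}(f|_{N_{x}})$.

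The final step, and the one carrying the analytic weight, is to deduce continuity of $u$ from smoothness of $\Phi(u)$. For this I would apply Proposition \ref{Smooth family defines continuous transversal distribution} directly to the smooth family $\Phi(u)$: it produces exactly the seminorm estimate $p_{M,n}(u(f))\leq C\,p_{M\times N,m+n}(f)$ witnessing boundedness, and hence continuity, of $u$ with respect to the Fr\'{e}chet topologies. Thus $u\in\Hom_{\C(M)}(\C(M\times N),\C(M))$, and since $u$ was arbitrary the two spaces coincide. The main obstacle is conceptual rather than computational: one must be careful that the hypothesis $\dim(M)>0$, used in Proposition \ref{openness of discontinuous values} to produce approximating sequences of points, is genuinely in force, since it is precisely the availability of a positive-dimensional base that allows one to deform functions on $N$ into elements of $\C(M\times N)$ and thereby force continuity through the Banach--Steinhaus and Special Curve arguments.
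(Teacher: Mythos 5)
Your proof is correct and follows exactly the paper's route: the theorem is obtained by chaining Proposition \ref{emptyness of discontinuous values} (every $u_{x}$ is continuous), Proposition \ref{Isomorphism between transversal distributions and smooth maps} ($\Phi(u)$ is a smooth family in $\C(M,\cE'(N))$), and Proposition \ref{Smooth family defines continuous transversal distribution} (a smooth family defines a continuous map), together with the trivial reverse inclusion. The paper itself states the theorem as an immediate consequence of these three propositions, so your assembly matches its argument in both structure and substance.
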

Combining this theorem with the Schwartz kernel theorem we thus obtain for manifolds $M$ 
with $\dim(M)>0$ the isomorphism
\[
\Hom(\C(N),\C(M))\cong\Lin_{\C(M)}(\C(M\times N),\C(M)).
\]
\begin{rem}\rm
\begin{enumerate}
\item In a similar way we can prove a version of the Theorem \ref{theo_Main theorem} with compact supports. 
There are isomorphisms 
\[
\Hom(\C(N),\Cc(M))\cong\Cc(M,\cE'(N))\cong\Lin_{\Cc(M)}(\C(M\times N),\Cc(M)).
\]

\item If $\dim(M)=0$, for example if $M$ is a point, the Theorem \ref{theo_Main theorem} does not hold since there exist
discontinuous linear functionals on $\C(N)$. There is however no restriction on the dimension of $N$.
\end{enumerate}
\end{rem}

\section{Elements of $\Lin(\C(N),\C(M))$ as smooth families}

In this section we will for comparison briefly describe the space $\Lin(\C(N),\C(M))$ in terms of smooth
vector valued functions on $M$.

Take any $u\in\Lin(\C(N),\C(M))$. For every $x\in M$ we can define a linear map $u_{x}=\ev_{x}\circ u\in\cE^{*}(N)$, given by
\[
u_{x}(f)=u(f)(x)
\]
for $f\in\C(N)$. In this way we obtain an injective linear map
\[
\Phi:\Lin(\C(N),\C(M))\to\cF(M,\cE^{*}(N)).
\]
We will equip the space $\cE^{*}(N)$ with the topology of pointwise convergence to make it a complete locally convex vector space.

\begin{prop}\label{Linear maps}
We have an isomorphism of vector spaces
\[
\Phi:\Lin(\C(N),\C(M))\to\C(M,\cE^{*}(N)).
\]
\end{prop}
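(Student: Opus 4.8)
The plan is to mirror the structure of Proposition \ref{Isomorphism between transversal distributions and smooth maps}, but now with $\cE^{*}(N)$ (equipped with the topology of pointwise convergence) in place of $\cE'(N)$. First I would verify that $\Phi$ lands in $\C(M,\cE^{*}(N))$. Here the crucial fact is that $\cE^{*}(N)=\C(N)'_{\text{init}}$ is complete (as noted in the Preliminaries), so by the same completeness argument it suffices to check scalar smoothness of $\Phi(u):M\to\cE^{*}(N)$. The continuous dual of $\cE^{*}(N)$ with the topology of pointwise convergence is exactly the span of evaluations at points of $\C(N)$; concretely, using the identification $\cE^{*}(N)'\cong\C(N)$ stated at the end of Section~2, every continuous functional on $\cE^{*}(N)$ has the form $\hat{f}(\ell)=\ell(f)$ for some $f\in\C(N)$. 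Then
\[
(\hat{f}\circ\Phi(u))(x)=u_{x}(f)=u(f)(x),
\]
which is smooth because $u(f)\in\C(M)$ by hypothesis. Hence $\Phi(u)$ is scalarly smooth, and by completeness of $\cE^{*}(N)$ it is smooth.

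Next I would show injectivity, which is immediate and already asserted in the text: if $\Phi(u)=0$ then $u_{x}(f)=u(f)(x)=0$ for all $x$ and all $f$, so $u=0$. The substantive part is surjectivity. Given $\overline{u}\in\C(M,\cE^{*}(N))$, define $u:\C(N)\to\C(M)$ by $u(f)(x)=\overline{u}_{x}(f)$, so that $\Phi(u)=\overline{u}$ by construction once we confirm $u(f)\in\C(M)$. Linearity of $u$ in $f$ is clear from linearity of each $\overline{u}_{x}$, so the only issue is smoothness of $u(f)$ as a function on $M$. I would again factor $u(f)$ as a composition
\[
M\overset{(\overline{u},\,\underline{f})}{\longrightarrow}\cE^{*}(N)\times\C(N)\overset{\ev}{\longrightarrow}\CC,
\]
where $\underline{f}:M\to\C(N)$ is the constant map with value $f$ and $\ev(\ell,g)=\ell(g)$ is the evaluation pairing.

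The main obstacle, and the point where this proof genuinely differs from Proposition \ref{Isomorphism between transversal distributions and smooth maps}, is establishing smoothness of this composition. In the earlier proposition one invoked Theorem~$5.19$ of \cite{KrMi97} to pass from separate continuity to boundedness of $\ev$, but that argument relied on both factors being suitable (in particular $\C(N)$ being complete and $\cE'(N)$ being nicely behaved); here the relevant dual pairing is $\cE^{*}(N)\times\C(N)$ with $\cE^{*}(N)$ carrying only the pointwise topology, so I cannot expect $\ev$ to be jointly smooth in general. Instead I would exploit that the second argument is \emph{constant}: smoothness of $u(f)$ reduces to smoothness of the single linear map $\ev_{f}:\cE^{*}(N)\to\CC$, $\ell\mapsto\ell(f)$, precomposed with the smooth map $\overline{u}:M\to\cE^{*}(N)$. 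But $\ev_{f}=\hat{f}$ is precisely a continuous (hence bounded, hence smooth) linear functional on $\cE^{*}(N)$ by the identification $\cE^{*}(N)'\cong\C(N)$ recalled above. Thus $u(f)=\hat{f}\circ\overline{u}$ is a composition of smooth maps and is therefore smooth, which closes the gap without needing any joint-continuity input. Finally I would check that $u\mapsto\Phi(u)$ and $\overline{u}\mapsto u$ are mutually inverse, completing the proof that $\Phi$ is a linear isomorphism.
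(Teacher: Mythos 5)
Your proposal is correct and follows essentially the same route as the paper: scalar smoothness plus completeness of $\cE^{*}(N)$ for well-definedness, and for surjectivity the observation that $u(f)=\hat{f}\circ\overline{u}$ is smooth because $\hat{f}\in\cE^{*}(N)'\cong\C(N)$ is a continuous, hence smooth, linear functional. Your intermediate detour through the evaluation pairing $\cE^{*}(N)\times\C(N)\to\CC$ is unnecessary (as you yourself conclude), since the paper goes directly to the composition $\hat{f}\circ\overline{u}$, but the argument you settle on is exactly the paper's.
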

\begin{proof}
We will first show that the map $\Phi(u):M\to\cE^{*}(N)$ is smooth for any $u\in\Lin(\C(N),\C(M))$. Since $\cE^{*}(N)$ is complete, it
is enough to show that it is scalarly smooth. For any $f\in\C(M)\cong\cE^{*}(N)'$ we have 
\[
(\hat{f}\com\Phi(u))(x)=\hat{f}(u_{x})=u_{x}(f)=u(f)(x).
\]
Since $u(f)\in\C(M)$, it follows that $\Phi(u)$ is smooth. This shows that $\Phi$ maps $\Lin(\C(N),\C(M))$ into $\C(M,\cE^{*}(N))$.

We still have to show that $\Phi$ is surjective. Take any $\overline{u}\in\C(M,\cE^{*}(N))$ and any $f\in\C(N)$. Since $\hat{f}$
is a continuous linear functional on $\cE^{*}(N)$ it is smooth, so the mapping 
$\hat{f}\com\overline{u}:M\to\CC$ is smooth. If we define $u:\C(N)\to\C(M)$ by $u(f)=\hat{f}\com\overline{u}$,
we have $\Phi(u)=\overline{u}$.
\end{proof}

\begin{ex}\rm
Take any discontinuous $u_{0}\in\cE^{*}(N)$ and any nonzero $\rho\in\C(M)$. The family $u\in\C(M,\cE^{*}(N))$, given by
\[
u_{x}=\rho(x)u_{0}
\]
for $x\in M$ then defines a nontrivial example of a noncontinuous linear map in $\Lin(\C(N),\C(M))$.
\end{ex}

\noindent
{\bf Acknowledgements:} 
I would to thank O. Dragičević and F. Forstnerič for support and encouragement during research. I would also like to thank J. Mrčun
for many helpful discussions related to this topic.

\end{document}